\documentclass[11pt,a4paper]{amsart}
\usepackage{amssymb}
\usepackage{amsmath}
\usepackage{amsfonts,a4wide}
\usepackage{bbm}
\usepackage{enumerate}
\usepackage[latin1]{inputenc}
\usepackage{shadow}
\usepackage[all]{xy}
\usepackage{tikz-cd}

\setlength{\parskip}{1.5ex}
\textwidth 5.8in
\textheight 9.1in
\oddsidemargin 0.4in
\evensidemargin 0.4in

\newtheorem{thm}[subsection]{Theorem}
\newtheorem{defn}[subsection]{Definition}
\newtheorem{prop}[subsection]{Proposition}

\newtheorem{lemma}[subsection]{Lemma}

\newtheorem{remark}[subsection]{Remark}

\newcommand{\Z}{\mathbb{Z}}

\newcommand{\F}{\mathbb{F}}

\newcommand{\im}{\operatorname{Im}}

\DeclareMathOperator{\coker}{coker}

\DeclareMathOperator{\colim}{colim}

\DeclareMathOperator{\ind}{ind}

\begin{document}

\title[The cohomological index is not additive]
{The cohomological index of free $\Z/p$-actions is not additive with respect to join}

\author{Rafael Gomes}
\address{Department of Mathematics, Instituto Superior T\'ecnico,
Universidade de Lisboa, Av. Rovisco Pais, 1049-001 Lisboa, Portugal}
\email{rafael.m.gomes@tecnico.ulisboa.pt}

\author{Gustavo Granja}
\address{Center for Mathematical Analysis, Geometry and Dynamical Systems, Instituto Superior T\'ecnico,
Universidade de Lisboa, Av. Rovisco Pais, 1049-001 Lisboa, Portugal}
\email{gustavo.granja@tecnico.ulisboa.pt}

\maketitle

\begin{abstract} 
Let $p$ denote an odd prime. 
We show by example that the inequalities obtained in \cite{GKPS} for the behaviour of the cohomological index of a join of
free $\Z/p$-actions are sharp. Namely, for all odd integers $k,l$ at least one of which is greater than one, we give examples 
of finite free $\Z/p$-CW complexes of cohomological indices $k$ and $l$ whose join has index $k+l+1$ and also examples 
where the join has index $k+l-1$.
\end{abstract}

\section{Introduction}

Let $p$ be an odd prime, $\tilde X$ be a free $\Z/p$-CW complex and $X= \tilde X/\Z/p$ be its orbit space. Let $f\colon X \to B\Z/p$
be a map classifying the covering $\tilde X \to X$. The \emph{cohomological index} of the free $\Z/p$-action
is the dimension of the $\F_p$-vector space $\im f_* \subset H_*(B\Z/p; \F_p)$.  We will use the notation $\ind(X)$ 
for the cohomological index so as not to clash with the notation in \cite{GKPS}. It will always be clear from the context
 which free action having $X$ as an orbit space we are considering.

Using the product structure on $H^*(B\Z/p)$ and the action of the Bockstein operation, it is easy to check that $\im f_*$ is of 
the form $\oplus_{i=0}^{k-1} H_i(B\Z/p)$ for some $0\leq k \leq \infty$ (i.e. there are no "gaps" in the image of $f_*$). Hence
one may also define $\ind(X)$ as the least possible degree of a non-zero class in $\ker f^* \colon H^*(B\Z/p) \to H^*(X)$.

Cohomological indices of this type for free $G$-spaces were introduced and studied by 
Fadell and Rabinowitz \cite{FR}. Hara and Kishimoto \cite{HK} call $\ind(X)-1$ the \emph{height} of the covering $\tilde X \to X$. 
The height is a lower bound for the category (in the sense of Lusternik-Schnirrelman) 
of the classifying map $f\colon X \to B\Z/p$ (see \cite[Section 3]{HK}).

Let $\tilde X$ and $\tilde Y$ be free $\Z/p$-CW complexes. The join $\tilde X \ast \tilde Y$ has a natural diagonal $\Z/p$-action and 
we write 
$$X \ast_p Y = (\tilde X \ast \tilde Y)/\Z/p$$
for the orbit space. This paper is concerned with the relation between 
$\ind(X \ast_p Y)$ and the cohomological  indices of $X$ and $Y$.

The following\footnote{The paper \cite{GKPS} dealt only with free $\Z/p$-actions which can be embedded in an odd sphere with 
a free linear $\Z/p$-action but it is easy to see that no loss of generality results from making that assumption.}
is known regarding this question. 

\begin{thm}{\cite[Proposition 3.9]{GKPS}}
\label{oldaddit}
Let $\tilde X$ and $\tilde Y$ be free $\Z/p$-CW complexes. Then  
$$ 
\ind(X \ast_p Y) = \ind(X) + \ind(Y) \ \text{ when either } \ind(X) \text{ or } \ind(Y)  \text{ are even}
$$
When $\ind(X)$ and $\ind(Y)$ are both odd, the following inequalities hold
$$ \ind(X) + \ind(Y) -1 \leq \ind(X \ast_p Y) \leq \ind(X) + \ind(Y) + 1. $$
\end{thm}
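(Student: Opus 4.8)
The plan is to compute the kernel ideal $K_W := \ker f_W^* \subseteq H^*(B\Z/p;\F_p)$ for each of $W=X,Y,X\ast_p Y$ and to establish the two bounds by independent arguments: an upper bound via relative cup products and a lower bound via a join pairing on homology. The starting observation is that, since each $H^d(B\Z/p;\F_p)$ is one-dimensional, $f_X^*$ is in each degree either injective (hence an isomorphism) or zero; thus $\ind X=k$ is equivalent to $\ker f_X^* = H^{\ge k}(B\Z/p;\F_p)$, the sum of all groups in degrees $\ge k$, while dually $\im f_{X*}=\langle e_0,\dots,e_{k-1}\rangle$, where $e_i$ generates $H_i(B\Z/p;\F_p)$, with $e_{2j}$ dual to $y^j$ and $e_{2j+1}$ dual to $xy^j$ in $H^*(B\Z/p;\F_p)=\Lambda(x)\otimes\F_p[y]$.

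For the upper bound I would model $\tilde X \ast \tilde Y \to Z:=X\ast_p Y \to B\Z/p$ as the fibrewise join over $B\Z/p$ of the Borel fibrations of $\tilde X$ and $\tilde Y$. Then $Z$ is covered by the two ends $C_X,C_Y$, open sets that deformation retract onto $X$ and $Y$. If $\alpha\in\ker f_X^*$, then $f_Z^*\alpha$ restricts to $0$ on $C_X$ and so lifts to $H^*(Z,C_X)$; likewise $f_Z^*\beta$ lifts to $H^*(Z,C_Y)$ for $\beta\in\ker f_Y^*$. Their relative product lives in $H^*(Z,C_X\cup C_Y)=H^*(Z,Z)=0$, whence $f_Z^*(\alpha\beta)=0$, i.e. $\ker f_X^*\cdot\ker f_Y^*\subseteq\ker f_Z^*$. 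Computing the product ideal in $\Lambda(x)\otimes\F_p[y]$ gives $H^{\ge k+l}$ in general; but when $k,l$ are both odd the bottom (degree $k+l$) products all vanish, being multiples of $x^2=0$, so the ideal starts only in degree $k+l+1$. This yields $\ind Z\le k+l$ when some index is even and $\ind Z\le k+l+1$ when both are odd.

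For the lower bound I would use the natural pairing $\star\colon H_i(B\Z/p)\otimes H_j(B\Z/p)\to H_{i+j+1}(B\Z/p)$ carried by the join (the degree shift being the join suspension coordinate), which fits into a commuting square with $f_{X*}\otimes f_{Y*}$ and $f_{Z*}$, so that $\im f_{Z*}\supseteq \im f_{X*}\star \im f_{Y*}$. I would pin down $\star$ from naturality with respect to the homology Bockstein together with one base computation, the outcome being that $e_a\star e_b\neq 0$ precisely when at least one of $a,b$ is odd. Hence the top pairing $e_{k-1}\star e_{l-1}$ is nonzero unless $k-1$ and $l-1$ are both even, that is, unless $k,l$ are both odd. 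In the former case $e_{k+l-1}\in\im f_{Z*}$, and since $\im f_{Z*}$ has no gaps this forces $\ind Z\ge k+l$; in the latter case one falls back to $e_{k-1}\star e_{l-2}\neq 0$, giving $e_{k+l-2}\in\im f_{Z*}$ and $\ind Z\ge k+l-1$. Combined with the upper bounds, this produces the equality $\ind Z=k+l$ when some index is even and the estimate $k+l-1\le \ind Z\le k+l+1$ when both are odd.

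The main obstacle is the construction and computation of the pairing $\star$ — equivalently, the connecting homomorphism in the Mayer--Vietoris sequence of the fibrewise join. One must produce the pairing at the level of orbit spaces (a twisted, fibrewise join rather than an ordinary one), verify the commuting square, and determine the exact vanishing pattern of $\star$, including the sign bookkeeping for odd $p$. Once the parity behaviour of $\star$ is established — and it mirrors exactly the relation $x^2=0$ that drives the upper bound — the remaining steps are formal, and the fact that neither extreme of the odd--odd estimate is forced is precisely what necessitates the explicit sharpness examples of the paper.
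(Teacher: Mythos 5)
The paper does not actually prove this statement --- it is quoted from \cite[Proposition 3.9]{GKPS} (with a footnote on removing the embeddability hypothesis) --- so there is no internal proof to compare against; I can only assess your argument on its own terms. Your upper bound is correct and essentially complete: the two cone ends of $\tilde X \ast \tilde Y$ are invariant and deformation retract equivariantly onto $\tilde X$ and $\tilde Y$, so they descend to an open cover of $Z=X\ast_p Y$ by copies of $X$ and $Y$ compatible with the classifying maps, the relative cup product gives $\ker f_X^\ast \cdot \ker f_Y^\ast \subseteq \ker f_Z^\ast$, and the computation in $\Lambda(x)\otimes \F_p[y]$ (using $x^2=0$, valid since $p$ is odd) yields $\ind Z \le k+l$ in the mixed-parity case and $\le k+l+1$ in the odd--odd case.

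The lower bound, however, rests entirely on the pairing $\star$, which you neither construct nor compute, and that is where all of the content lies --- it is not merely ``sign bookkeeping.'' The concrete difficulty is this: the obvious join pairing lives on the covers, $\tilde H_i(\tilde X)\otimes \tilde H_j(\tilde Y)\to \tilde H_{i+j+1}(\tilde X\ast\tilde Y)$, whereas the join cells of $Z$ contribute the coinvariants $\bigl(C_*(\tilde X)\otimes_{\F_p} C_*(\tilde Y)\bigr)_{\Z/p}$ for the diagonal action, and there is no natural map to this from $C_*(X)\otimes C_*(Y)=C_*(\tilde X)_{\Z/p}\otimes C_*(\tilde Y)_{\Z/p}$; the natural map between coinvariants goes the other way. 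A pairing can be manufactured with the norm, $[z]\star[w]:=[\tau^{p-1}\tilde z\ast\tilde w]$ for lifts $\tilde z,\tilde w$ of cycles (well defined because $\tau^{p-1}\tilde z$ is invariant, so $\tau^{p-1}\tilde z\ast\tau c=\tau(\tau^{p-1}\tilde z\ast c)$), and it is precisely this norm that produces the vanishing you need: for $a=b=0$ the class $e_0\star e_0$ is the sum of all $p$ edges of $\{\ast\}\ast_p\{\ast\}$, which maps to $-\sum_{d=0}^{p-1}d\equiv 0$ in $H_1(B\Z/p)$. Moreover, your proposed determination of $\star$ by ``Bockstein naturality plus one base computation'' cannot work as stated: the homology Bockstein kills every odd-degree class of $H_*(B\Z/p)$, so it places no constraint at all on $e_{2i}\star e_{2j}$, which lies in odd degree; you need an additional input such as compatibility with the $H^*(B\Z/p)$-cap-product module structure, or a direct chain-level computation of the $R$-module $H_*(\tilde X)\otimes_{\F_p}H_*(\tilde Y)$ in the spirit of Lemmas~\ref{joinZp} and~\ref{tensors}. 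Until the pairing and its vanishing pattern ($e_a\star e_b\neq 0$ exactly when $a$ or $b$ is odd --- note that $e_{2i}\star e_{2j}=0$ is forced by the paper's example $V$, so this case cannot be wished away) are actually established, the lower bound, and with it the equality in the mixed-parity case, remains unproved.
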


The purpose of this note is to show that the inequalities in the previous statement are sharp by giving examples of 
$\Z/p$-free CW complexes realizing the upper and lower bounds for any given pair of odd $\ind(X)$ and $\ind(Y)$ 
not both equal to one. The case where both indices are one is trivial (and additivity holds).

Our motivation comes from contact topology. In \cite{GKPS}, following Givental \cite{Gi}, the cohomological index was used
 to construct an integer valued quasi-morphism on the universal cover of the contactomorphism group of a lens space 
 (with its standard contact structure). If the cohomological index were exactly additive, the arguments in that paper would 
 prove Sandon's contact version of the Arnold conjecture for lens spaces. Unfortunately additivity does not hold, so we are not 
 able to improve on the lower bounds in \cite{GKPS} for the number of translated points of a contactomorphism of a lens space 
 which is contact isotopic to the identity. 

\subsection{Notation and conventions.} Homology and cohomology is with $\F_p$ coefficients for $p$ an odd prime, unless 
otherwise noted. We will generally denote free $\Z/p$-CW complexes by letters decorated with a tilde and remove the 
tilde to denote their orbit spaces. We will write $\{\ast\}$ for a one point space, the orbit space of $\Z/p$.

We write $R= \F_p[\Z/p]$ for the group algebra of $\Z/p$, $g$ for the canonical generator of $\Z/p$ and $\tau=g^0-g \in R$ so
that $R= \F_p[\tau]/\tau^p$. The indecomposable finitely generated $R$-modules are the submodules $\tau^i R \subset R$ for $i=0,\ldots,p-1$. 
Any finitely generated $R$-module is isomorphic to a direct sum of indecomposable submodules.  

$C_*(\tilde X)$ denotes the cellular chain complex of the free $\Z/p$-CW complex $\tilde X$ with $\F_p$ 
coefficients. This is a chain complex of free $R$-modules. $C_*(X)$ denotes the cellular chain complex of the orbit space also 
with $\F_p$-coefficients. Note that $C_*(X) = C_*(\tilde X) \otimes_R \F_p$ where $\F_p$ is regarded as an $R$-algebra via
 the augmentation (which sends $\tau$ to $0$). 

\subsection*{Acknowledgements} The first author was partially supported by FCT/Portugal through the fellowship BL236/2017. The second author was partially supported by FCT/Portugal through project UIDB/MAT/04459/2020. 

\section{Generalities on the homology of a join}

If $X$ and $Y$ are CW complexes, we give $X \ast Y= (X \times [0,1] \times Y)/\sim$ its natural CW structure. 
This has $X$ and $Y$ as subcomplexes, while
the remaining cells are of the form $e \ast f$ for $e$ a cell of $X$ and $f$ a cell of $Y$. If $e$ is an $m$-cell and $f$ an $n$-cell,
the boundary map of the generator $e\ast f$ in $C_*(X \ast Y)$ is given by the expression
$$
\partial(e\ast f) = \begin{cases}
f-e & \text{ if } m=n=0 \\
f - e\ast (\partial f)  & \text{ if } m=0, n>0 \\
\partial(e) \ast f + (-1)^{m+1} e & \text{ if } m>0, n=0 \\
\partial(e) \ast f + (-1)^{m+1} e \ast \partial(f) & \text{ otherwise.}
\end{cases}
$$
When $M$ and $N$ are $R$-modules, the vector space $M \otimes_{\F_p} N$ has a natural diagonal $\Z/p$-action and therefore
becomes an $R$-module. In terms of the presentation of $R$ as $\F_p[\tau]/\tau^p$, the module structure on a decomposable 
element of $M \otimes_{\F_p} N$ is described by
\begin{equation}
\label{actiontau}
\tau(a\otimes b) = (\tau a) \otimes b + a \otimes (\tau b) - (\tau a) \otimes (\tau b)
\end{equation}
Consequently, for $\tilde X$ and $\tilde Y$ free $\Z/p$-CW complexes, the action of $\tau$ on an element of the form
$e\ast f \in C_*(\tilde X \ast \tilde Y)$ is given by 
$$\tau(e \ast f) = (\tau e) \ast f +e\ast(\tau f) - (\tau e)\ast (\tau f).$$

\subsection{Homology of $X \ast_p \{\ast\}$}
Let $\tilde X$ be a free $\Z/p$-CW complex.  The join $\tilde X \ast \Z/p$ is a union of $p$ cones on a 
common base $\tilde X$, hence
$$
\tilde X \ast \Z/p \simeq \vee_{i=1}^{p-1} \Sigma \tilde X
$$
\begin{lemma}
\label{joinZp}
Let $\tilde X$ be a free $\Z/p$-CW complex. For each $k\geq 1$ we have the following
isomorphism of $R$-modules
$$ H_{k+1} (\tilde X \ast \Z/p ) \cong H_k(\tilde X) \otimes_{\F_p} \tau R $$
(where the tensor product is given the $R$-module structure described by \eqref{actiontau}).
\end{lemma}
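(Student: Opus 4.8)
The plan is to extract the isomorphism from the long exact sequence of the pair $(\tilde X \ast \Z/p,\ \tilde X \sqcup \Z/p)$, where $\tilde X$ and $\Z/p$ are the two ends of the join sitting inside it as disjoint $\Z/p$-subcomplexes. Since the subcomplex $C_*(\tilde X)\oplus C_*(\Z/p)$ is $\Z/p$-stable, every complex and map below consists of $R$-modules and $R$-module maps, so the resulting long exact sequence is one of $R$-modules and keeping track of the module structure costs nothing extra. First I would identify the relative (quotient) complex $Q_* = C_*(\tilde X \ast \Z/p)/\bigl(C_*(\tilde X)\oplus C_*(\Z/p)\bigr)$. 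Its basis is the set of join cells $e \ast g^j$, so as graded vector spaces $Q_{m+1}=C_m(\tilde X)\otimes_{\F_p} R$, the factor $R$ being $C_0(\Z/p)$. Reducing the boundary formula for $e \ast g^j$ modulo the two ends kills the terms $\pm e$ and $g^j-e$, leaving $\partial(e\ast g^j)\equiv \partial(e)\ast g^j$, so the differential on $Q_*$ is that of $C_*(\tilde X)$ tensored with $\id$ (up to the sign from the degree shift); and reducing $\tau(e\ast g^j)=(\tau e)\ast g^j + e\ast(\tau g^j)-(\tau e)\ast(\tau g^j)$ shows the $R$-action is exactly the diagonal one of \eqref{actiontau}. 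Thus $Q_*\cong\bigl(C_*(\tilde X)\otimes_{\F_p} R\bigr)[1]$ as complexes of $R$-modules, and since $R$ is $\F_p$-free, $H_{k+1}(Q_*)\cong H_k(\tilde X)\otimes_{\F_p} R$ with the diagonal action.

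The crux is the connecting homomorphism $\delta\colon H_{k+1}(Q_*)\to H_k(\tilde X \sqcup \Z/p)$. Given a $k$-cycle $c$, the class $c\otimes g^j$ lifts to the chain $c\ast g^j$, whose boundary, by the formula for $m=k>0$, $n=0$, is $\partial(c\ast g^j)=(-1)^{k+1}c$. Hence $\delta$ is, up to sign, $\id\otimes\epsilon$, where $\epsilon\colon R\to\F_p$ is the augmentation. Since $\epsilon$ is surjective with kernel exactly $\tau R$ (both of $\F_p$-dimension $p-1$), the map $\delta$ is onto and $\ker\delta = H_k(\tilde X)\otimes_{\F_p}\tau R$. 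The same computation one degree higher (valid since $k\geq 1$) shows that the connecting map out of $H_{k+2}(Q_*)$ is also surjective, so in the portion
$$
H_{k+2}(Q_*)\xrightarrow{\ \delta\ } H_{k+1}(\tilde X\sqcup\Z/p)\xrightarrow{\ \iota_*\ } H_{k+1}(\tilde X\ast\Z/p)\xrightarrow{\ \pi_*\ } H_{k+1}(Q_*)\xrightarrow{\ \delta\ } H_k(\tilde X\sqcup\Z/p)
$$
the map $\iota_*$ vanishes and $\pi_*$ is injective with image $\ker\delta$. Therefore $\pi_*$ restricts to an isomorphism of $R$-modules $H_{k+1}(\tilde X\ast\Z/p)\xrightarrow{\ \cong\ } H_k(\tilde X)\otimes_{\F_p}\tau R$, which is the assertion; its inverse sends $[c]\otimes r$ to the class of the cycle $c\ast r$ (indeed $\partial(c\ast r)=(-1)^{k+1}\epsilon(r)\,c=0$ for $r\in\tau R$).

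I expect the only real subtlety to be bookkeeping the $R$-module structure rather than any single hard estimate. In particular one must check that the diagonal action of \eqref{actiontau} on $H_k(\tilde X)\otimes_{\F_p} R$ restricts to the submodule $H_k(\tilde X)\otimes_{\F_p}\tau R$ — which it does, since $\tau R$ is an ideal — and that this restricted structure is the one asserted in the statement; the signs introduced by the degree shift in $Q_*$ also need a routine check. As a consistency test, the homotopy equivalence $\tilde X\ast\Z/p\simeq\vee_{p-1}\Sigma\tilde X$ gives $\dim_{\F_p}H_{k+1}(\tilde X\ast\Z/p)=(p-1)\dim_{\F_p}H_k(\tilde X)=\dim_{\F_p}\bigl(H_k(\tilde X)\otimes_{\F_p}\tau R\bigr)$, in agreement with the isomorphism above.
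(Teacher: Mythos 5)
Your argument is correct and arrives at exactly the isomorphism the paper uses: the inverse map $[c]\otimes r\mapsto[c\ast r]$ you produce at the end is precisely the paper's map $[z]\otimes\tau\mapsto[z\ast g^0-z\ast g]$, which the paper simply writes down without further justification. Your long exact sequence of the pair $(\tilde X\ast\Z/p,\ \tilde X\sqcup\Z/p)$ is a complete verification of what the paper asserts in one line, so there is nothing to correct.
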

\begin{proof}
Given a cellular cycle $z \in C_k(\tilde X)$, the isomorphism 
$$H_k(\tilde X) \otimes_{\F_p} \tau R \to H_{k+1}(\tilde X \ast \Z/p)$$
 sends  $[z] \otimes \tau$ to the homology class $[z \ast g^0 - z \ast g] \in H_{k+1}(\tilde X \ast \Z/p)$.
\end{proof}
For later use we record the following computation which follows easily from \eqref{actiontau} (see for 
instance \cite[Section 2.3]{Be}). 
\begin{lemma}
\label{tensors}
For the $R$-module structure on the tensor product described by \eqref{actiontau}
we have 
$$
\tau^{p-1}R \otimes_{\F_p} \tau R \cong \tau R, \quad \quad \tau^{p-2} R \otimes_{\F_p} \tau R \cong \tau^2 R \oplus R
$$
\end{lemma}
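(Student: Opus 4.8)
The plan is to treat the two isomorphisms separately, reducing everything to the structure theory of finitely generated modules over $R=\F_p[\tau]/\tau^p$, whose indecomposables are the cyclic modules $\tau^iR$ of $\F_p$-dimension $p-i$.

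First I would dispose of the easy isomorphism $\tau^{p-1}R\otimes_{\F_p}\tau R\cong\tau R$. The point is that $\tau^{p-1}R$ is one-dimensional and $\tau$ acts as zero on it (it is the trivial $R$-module), so by \eqref{actiontau} the $\F_p$-linear map $b\mapsto \tau^{p-1}\otimes b$ intertwines the two $\tau$-actions; being an isomorphism of vector spaces onto $\tau^{p-1}R\otimes_{\F_p}\tau R$, it is the desired isomorphism of $R$-modules.

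For the second isomorphism, write $M=\tau^{p-2}R\otimes_{\F_p}\tau R$, which has $\F_p$-dimension $2(p-1)$. The first step is to pin down the number of indecomposable summands. Tensoring the short exact sequence $0\to\tau^{p-1}R\to\tau^{p-2}R\to\F_p\to0$ over the field $\F_p$ with $\tau R$ is exact, and, using the first isomorphism to identify the outer terms, produces a short exact sequence $0\to\tau R\to M\to\tau R\to 0$ of $R$-modules. Since both the sub and the quotient are cyclic, $M$ is generated by at most two elements, hence is a sum of at most two indecomposables; as no indecomposable has dimension exceeding $p<2(p-1)$, there are exactly two. Comparing dimensions, the only possibilities are $M\cong R\oplus\tau^2R$ or $M\cong\tau R\oplus\tau R$.

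It remains to distinguish these, and this is the one genuine computation. The modules differ in how $\tau^{p-1}$ acts: it annihilates $\tau R\oplus\tau R$ (because $\tau^{p-1}\cdot\tau R\subseteq\tau^pR=0$), whereas it is nonzero on the free summand of $R\oplus\tau^2R$. So I would produce a single element of $M$ not killed by $\tau^{p-1}$. The clean way to organise the calculation is to observe that $\tau^{p-1}$ is the norm element $\sum_{k=0}^{p-1}g^k$ (since $\binom{p-1}{k}\equiv(-1)^k$), so that $\tau^{p-1}(a\otimes b)=\sum_{k=0}^{p-1}(g^ka)\otimes(g^kb)$ for the diagonal action. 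Applying this to $a\otimes b=\tau^{p-2}\otimes\tau$ and using $g^k\tau^{p-2}=\tau^{p-2}-k\tau^{p-1}$ together with the binomial expansion of $g^k\tau$, the coefficient of the basis vector $\tau^{p-1}\otimes\tau^{p-1}$ comes out to $\sum_{k=0}^{p-1}k\binom{k}{p-2}\equiv-1\pmod p$, which is nonzero. Hence $\tau^{p-1}$ does not annihilate $M$, ruling out $\tau R\oplus\tau R$ and forcing $M\cong R\oplus\tau^2R$. The main obstacle is precisely this last bookkeeping; casting $\tau^{p-1}$ as the norm element is what keeps it from becoming an unwieldy iteration of \eqref{actiontau}.
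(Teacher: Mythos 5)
Your argument is correct, and there is nothing in the paper to compare it against: the paper does not prove Lemma \ref{tensors} at all, merely asserting that it ``follows easily from \eqref{actiontau}'' and pointing to \cite[Section 2.3]{Be}, where tensor products of Jordan blocks for a cyclic $p$-group are worked out in general. Your proof is a legitimate self-contained verification, and the two steps that require care are both handled properly. The counting step is sound because $R$ is local: the short exact sequence $0\to\tau R\to M\to\tau R\to 0$ (obtained by tensoring $0\to\tau^{p-1}R\to\tau^{p-2}R\to\F_p\to 0$ with $\tau R$ over the field $\F_p$) bounds the number of generators, hence of indecomposable summands, by two, and the dimension count $i+j=2$ leaves exactly the two candidates you list. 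The discriminating step is also right: $\binom{p-1}{k}\equiv(-1)^k$ identifies $\tau^{p-1}$ with the norm element $\sum_k g^k$, which acts diagonally, and the coefficient of $\tau^{p-1}\otimes\tau^{p-1}$ in $\tau^{p-1}(\tau^{p-2}\otimes\tau)$ is $(p-2)\binom{p-2}{p-2}+(p-1)\binom{p-1}{p-2}\equiv -2+1=-1\not\equiv 0$, so $\tau^{p-1}M\neq 0$ and the summand $R$ must occur. Casting $\tau^{p-1}$ as the norm is indeed what makes the computation tractable; the only alternative of comparable efficiency would be to invoke the known formula for $\tau^aR\otimes\tau^bR$ from the representation theory of $\Z/p$, which is essentially what the citation to Benson does.
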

The following observation follows immediately from the definitions.
\begin{lemma}
\label{joinpt}
Let $\tilde X$ be a free $\Z/p$-CW complex. There is a cofiber sequence
$$
\tilde X \xrightarrow{q} X \to X \ast_p \{\ast\}
$$
and hence a short exact sequence
$$
0 \to \coker q_* \to \tilde H_*(X \ast_p \{\ast\}) \to \Sigma\ker q_* \to 0 
$$
\end{lemma}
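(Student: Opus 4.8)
The plan is to identify $X \ast_p \{\ast\}$ with the mapping cone of $q$, after which the short exact sequence falls out of the resulting long exact homology sequence by a formal splicing argument. So the real work is entirely in producing the cofiber sequence.

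To produce it, I would use the standard description of a join as a double mapping cylinder: $\tilde X \ast \Z/p$ is the double mapping cylinder of the projections $\tilde X \xleftarrow{p_1} \tilde X \times \Z/p \xrightarrow{p_2} \Z/p$, with $\Z/p$ acting diagonally on all three terms. This is a genuine colimit, and the orbit functor $(-)/\Z/p$ is left adjoint to the trivial-action functor, hence preserves it. I would then compute the orbit diagram. The two outer orbit spaces are $\tilde X/\Z/p = X$ and $(\Z/p)/\Z/p = \{\ast\}$, while the diagonal action on $\tilde X \times \Z/p$ is free with orbit space homeomorphic to $\tilde X$ via $[(x,i)] \mapsto g^{-i}x$. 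Under this identification the map induced by $p_1$ is exactly $q\colon \tilde X \to X$ and the map induced by $p_2$ is the constant map to $\{\ast\}$. Thus $X \ast_p \{\ast\}$ is the double mapping cylinder of $X \xleftarrow{q} \tilde X \to \{\ast\}$, which is precisely the mapping cone $X \cup_q C\tilde X$ of $q$. (Equivalently, one can argue geometrically: the diagonal action permutes the $p$ cones over the common base $\tilde X$ freely and cyclically, so a single cone is a fundamental domain for the interior, the only identifications occur on the base where the action is the free action defining $q$, and the quotient is $C\tilde X$ with its base collapsed along $q$.) Either way one obtains the cofiber sequence $\tilde X \xrightarrow{q} X \to X \ast_p \{\ast\}$.

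With the cofiber sequence established, the second statement is immediate. It induces a long exact sequence in reduced homology
$$\cdots \to \tilde H_n(\tilde X) \xrightarrow{q_*} \tilde H_n(X) \to \tilde H_n(X \ast_p \{\ast\}) \xrightarrow{\partial} \tilde H_{n-1}(\tilde X) \xrightarrow{q_*} \tilde H_{n-1}(X) \to \cdots$$
Splicing this in each degree gives the conclusion: exactness shows that the image of $\tilde H_n(X) \to \tilde H_n(X \ast_p \{\ast\})$ is the cokernel of $q_*$ in degree $n$, while $\partial$ surjects onto the kernel of $q_*$ in degree $n-1$, so that
$$0 \to \coker q_* \to \tilde H_*(X \ast_p \{\ast\}) \to \Sigma\ker q_* \to 0,$$
with the suspension $\Sigma$ recording the degree drop in the connecting homomorphism $\partial$.

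I expect the only point requiring care to be the geometric identification in the first paragraph — specifically the explicit trivialization $(\tilde X \times \Z/p)/\Z/p \cong \tilde X$ that turns $p_1$ into $q$, together with the fact that orbits commute with the colimit presenting the join (which uses freeness of the diagonal action). Once the cofiber sequence is in hand, the homological algebra is entirely routine.
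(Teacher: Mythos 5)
Your proof is correct and follows the route the paper clearly intends: the paper offers no proof at all, declaring the lemma immediate from the definitions, and your identification of $X \ast_p \{\ast\}$ with the mapping cone of $q$ is exactly the geometric picture already set up in the text (the join $\tilde X \ast \Z/p$ as $p$ cones on the common base $\tilde X$, cyclically permuted by the action). The splicing of the long exact sequence into the stated short exact sequence is routine, as you say, so your write-up simply supplies the details the authors chose to omit.
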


\section{Counterexamples to additivity of the index}

We consider the standard model for the universal principal $\Z/p$ bundle 
$$  S^\infty \to L^\infty_p $$
where $\Z/p$ acts diagonally on $S^\infty = \colim S^{2n+1}$. The infinite lens space $L^\infty_p$ has a standard cell decomposition 
(described for instance in \cite[Example 2.43]{Ha}) with one cell in each dimension. This in turn gives rise to a $\Z/p$-free cell 
decomposition of $S^\infty = \tilde{L^\infty_p} = E\Z/p$ whose cellular chain complex $C_*(\tilde L^{\infty}_p)$ 
is the standard (periodic) resolution of $\F_p$ over $R$:
$$
\cdots \to R \xrightarrow{\tau^{p-1}}  R \xrightarrow{\tau} R \xrightarrow{\tau^{p-1}}  R \xrightarrow{\tau} R
$$
We write $\tilde L^k_p$ for the $k$-skeleton of 
this decomposition and note that 
$$
\tilde L^k_p = \begin{cases}
S^k & \text { if } k \text{ is odd } \\
S^{k-1} \ast \Z/p & \text{ if } k  \text{ is even,}
\end{cases}
$$
so that 
$$
L^{2m}_p = L^{2m-1}_p \ast_p \{\ast\}
$$
The actions on the skeleta $\tilde L^k_p$ are the easiest examples of actions with index $k+1$. 
Using the associativity and commutativity of the 
join up to natural homeomorphism (see \cite[Section B.2]{GKPS}) it is easy to see that the index is additive on skeleta of lens spaces as
$$
L^k_p \ast_p L^l_p \cong \begin{cases}
L^{k+l+1}_p & \text{ if } k \text{ or } l \text{ are odd } \\
L^{k+l-1} \ast_p ( \{\ast\} \ast_p \{\ast\})  & \text{ if both } k \text{ and } l \text{ are even} 
\end{cases}
$$
and one checks easily that $\ind( \{\ast\} \ast_p \{\ast\})=2$. 

We will now give examples in which the index is not additive. We first note that in order to produce examples in all possible cases 
 (i.e. for arbitrary odd indices of the factors, not both equal to one), it suffices to give examples of spaces $X$ with index $3$
such that additivity does not hold for $X \ast_p \{\ast\}$:
\begin{lemma}
Let $k\geq 1$ and $l\geq 0$ be integers. Let $\tilde X$ be a free $\Z/p$-CW complex and
 let $Z= X\ast_p L^{2k-1}_p$. Then additivity holds for the index of $Z\ast_p L^{2l}$ if and only if it holds for
 the index of $X \ast_p \{\ast\}$.
\end{lemma}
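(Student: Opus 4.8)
The plan is to reduce everything to a single numerical identity by rewriting $Z \ast_p L^{2l}_p$ as a join of $X \ast_p \{\ast\}$ with an odd-dimensional lens skeleton, and then to exploit the fact that joining with $L^{\mathrm{odd}}_p$ is always additive, because $\ind(L^{2m-1}_p) = 2m$ is even and so the even case of Theorem \ref{oldaddit} applies.

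First I would record the two index computations forced by Theorem \ref{oldaddit}. Since $\ind(L^{2k-1}_p) = 2k$ is even, the even case of that theorem gives, unconditionally,
$$\ind(Z) = \ind(X \ast_p L^{2k-1}_p) = \ind(X) + 2k.$$
Next I would massage $Z \ast_p L^{2l}_p$ using associativity and commutativity of $\ast_p$ together with the displayed homeomorphisms for joins of lens skeleta. Writing $Z \ast_p L^{2l}_p = X \ast_p (L^{2k-1}_p \ast_p L^{2l}_p)$ and applying the first (``$k$ or $l$ odd'') case of that homeomorphism — which applies because $2k-1$ is odd, irrespective of the parity of $2l$ — gives $L^{2k-1}_p \ast_p L^{2l}_p \cong L^{2k+2l}_p$. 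Using $L^{2m}_p = L^{2m-1}_p \ast_p \{\ast\}$ with $m = k+l \geq 1$ and rearranging once more then yields
$$Z \ast_p L^{2l}_p \cong (X \ast_p \{\ast\}) \ast_p L^{2(k+l)-1}_p.$$
Since $\ind(L^{2(k+l)-1}_p) = 2(k+l)$ is again even, the even case of Theorem \ref{oldaddit} applies a second time and gives, unconditionally,
$$\ind(Z \ast_p L^{2l}_p) = \ind(X \ast_p \{\ast\}) + 2(k+l).$$

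With these two computations in hand the equivalence is immediate. Additivity for $Z \ast_p L^{2l}_p$ is the assertion $\ind(Z \ast_p L^{2l}_p) = \ind(Z) + \ind(L^{2l}_p)$; substituting the formulas above together with $\ind(L^{2l}_p) = 2l+1$, this becomes $\ind(X \ast_p \{\ast\}) + 2(k+l) = \ind(X) + 2k + 2l + 1$, i.e. $\ind(X \ast_p \{\ast\}) = \ind(X) + 1$. As $\ind(\{\ast\}) = 1$ (it is the $k=0$ skeleton $L^0_p$, of index $1$), the right-hand side is exactly additivity for $X \ast_p \{\ast\}$, so the two additivity statements coincide.

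I do not expect a genuine obstacle here: the argument is pure bookkeeping with the join and with Theorem \ref{oldaddit}. The one point that needs care is the manipulation of the lens-skeleton joins — in particular confirming that the first case of the homeomorphism applies (because one factor, $L^{2k-1}_p$, is always odd-dimensional) and checking the edge case $l = 0$, where $L^{2l}_p = L^0_p = \{\ast\}$: there $m = k \geq 1$, so $L^{2(k+l)-1}_p = L^{2k-1}_p$ is a genuine skeleton and no empty join arises. It is worth remarking that no parity hypothesis on $\ind(X)$ is required, since both index computations hold unconditionally; when $\ind(X)$ is even every join above already has a factor of even index, so both additivity statements hold and the equivalence is trivially true.
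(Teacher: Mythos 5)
Your proof is correct and follows essentially the same route as the paper: both rewrite $Z \ast_p L^{2l}$ as $(X \ast_p \{\ast\}) \ast_p L^{2k+2l-1}$ via associativity/commutativity of the join, then apply the even case of Theorem \ref{oldaddit} to both sides and compare. Your extra care with the $l=0$ edge case (routing through $L^{2k+2l}_p$ rather than an $L^{-1}_p$ factor) is a small tidying of the same argument, not a different approach.
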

\begin{proof}
Since the join is associative and commutative up to natural homeomorphism we have 
$$
Z \ast_p L^{2l}  \cong X \ast_p L^{2k-1} \ast_p L^{2l-1} \ast_p \{\ast\} \cong (X \ast_p \{\ast\}) \ast_p L^{2k+2l-1}
$$
It follows from Theorem \ref{oldaddit} that 
$$ \ind(Z \ast_p L^{2l}) = \ind(X \ast_p \{\ast\}) + 2k + 2l $$
Since, again by Theorem \ref{oldaddit}, we have $\ind(Z)= \ind(X) + 2k$ and $\ind(L^{2l}) = \ind( \{\ast\}) + 2l$  the statement follows.
\end{proof}

Recall that $\tilde L^2 = S^1 \ast \Z/p \simeq\vee_{i=1}^{p-1} S^2$. As a $\Z[\Z/p]$-module, $H_2(\tilde L^2;\Z)$ is
 naturally the kernel of the augmentation homomorphism $\Z[\Z/p] \to \Z$. 
\begin{defn}
\label{upper}
Let $\tilde U$ be the free $\Z/p$ cell complex obtained from $\tilde L^2$ by attaching a free $\Z/p$ $2$-cell by a map 
$S^2 \xrightarrow{a} \tilde L^2$ with $a_*([S^2]) = g^0-2g+g^2 \in H_2(\tilde L^2;\Z)$. 
\end{defn}
Note that the attaching map $a$ in the previous definition exists as the Hurewicz homomorphism $\pi_2(\tilde L^2) \to H_2(\tilde L^2; \Z)$
is an isomorphism.  By definition, $\tilde U$ sits in a cofiber sequence 
$$
\Z/p \times S^2 \xrightarrow{\tilde a} \tilde L^2 \to \tilde U
$$
where $\tilde a$ is the equivariant extension of the map $a$.

We will make use of the following observation whose proof is left 
to the reader. 
\begin{lemma}
\label{transg}
Let $\tilde X$ be an $(m-2)$-connected free $\Z/p$-cell complex with $m\geq 2$. The differential $d_m \colon E_{m,0}^m \to E_{0,m-1}^m$ in
the Serre spectral sequence of the homotopy fiber sequence
$$
\tilde X \to X \to B\Z/p
$$
is trivial if and only if $\ind(X)\geq m+1$.
\end{lemma}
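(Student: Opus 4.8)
The plan is to read the equivalence off the homology Serre spectral sequence of the fibration $\tilde X \to X \to B\Z/p$, whose homotopy fiber is $\tilde X$ because $f\colon X\to B\Z/p$ classifies the covering $\tilde X\to X$. Write $E^r_{s,t}$ for this spectral sequence, so that $E^2_{s,t}=H_s(B\Z/p;H_t(\tilde X))$, with $\Z/p=\pi_1(B\Z/p)$ acting on $H_*(\tilde X)$ through the deck transformations and $d_r\colon E^r_{s,t}\to E^r_{s-r,t+r-1}$.

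First I would exploit the connectivity hypothesis. Since $\tilde X$ is $(m-2)$-connected (in particular connected, as $m\ge 2$), we have $H_0(\tilde X)=\F_p$ with trivial action and $H_t(\tilde X)=0$ for $1\le t\le m-2$. Hence the bottom row is $E^2_{s,0}=H_s(B\Z/p)$ and the first nonzero band above it is the column $t=m-1$. Inspecting bidegrees shows no nonzero differential touches $E_{m,0}$ or $E_{0,m-1}$ before the $d_m$ stage: for $2\le r<m$ a differential out of $E_{m,0}$ lands in $E_{m-r,r-1}$ with $H_{r-1}(\tilde X)=0$, and a differential into $E_{0,m-1}$ comes from $E_{r,m-r}$ with $H_{m-r}(\tilde X)=0$. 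Therefore $E^m_{m,0}=E^2_{m,0}=H_m(B\Z/p)$ and $E^m_{0,m-1}=E^2_{0,m-1}=H_{m-1}(\tilde X)_{\Z/p}$; moreover the only possibly nonzero differential leaving $E_{m,0}$ is $d_m$ itself, so $E^\infty_{m,0}=\ker\bigl(d_m\colon E^m_{m,0}\to E^m_{0,m-1}\bigr)$.

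Next I would bring in the edge homomorphism. The composite $H_m(X)\twoheadrightarrow E^\infty_{m,0}\hookrightarrow E^2_{m,0}=H_m(B\Z/p)$ is precisely $f_*$, so the image of $f_*$ in degree $m$ equals $E^\infty_{m,0}=\ker d_m\subseteq H_m(B\Z/p)$. The key observation is that $H_m(B\Z/p)\cong\F_p$ is one-dimensional: thus $d_m$ is trivial exactly when $\ker d_m$ is all of $H_m(B\Z/p)$, i.e. exactly when $f_*$ is onto in degree $m$, whereas if $d_m\ne 0$ then $\ker d_m=0$ and $f_*$ vanishes in degree $m$.

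Finally I would translate surjectivity back into the index. By definition $\ind(X)=\dim_{\F_p}\im f_*$ and $\im f_*=\oplus_{i=0}^{\ind(X)-1}H_i(B\Z/p)$ has no gaps; since each $H_i(B\Z/p)$ is one-dimensional, $H_m(B\Z/p)\subseteq\im f_*$ if and only if $m<\ind(X)$, i.e. $\ind(X)\ge m+1$. Combining the two equivalences gives $d_m=0\iff f_*$ surjective in degree $m\iff\ind(X)\ge m+1$, as claimed. The only point requiring care is the bookkeeping of the middle step, namely identifying the edge map with $f_*$ and checking that $d_m$ is the unique differential controlling the survival of $E_{m,0}$; once that is in place, the one-dimensionality of $H_m(B\Z/p)$ makes the equivalence immediate.
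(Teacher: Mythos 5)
Your proof is correct and complete; the paper explicitly leaves this lemma to the reader, and your argument is the standard one that is clearly intended: connectivity forces $E^m_{m,0}=E^2_{m,0}=H_m(B\Z/p)\cong\F_p$ with $d_m$ the only possibly nonzero differential out of that spot, the edge homomorphism identifies $E^\infty_{m,0}=\ker d_m$ with the degree-$m$ image of $f_*$, and one-dimensionality of $H_m(B\Z/p)$ together with the no-gaps description of $\im f_*$ converts surjectivity in degree $m$ into $\ind(X)\geq m+1$. No gaps to report.
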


\begin{prop}
\label{propupper}
Let $\tilde U$ be the space of Definition \ref{upper}. Then 
$$\ind(U)=3 \text{  and }\ind(U \ast_p \{\ast\})= 5.$$
\end{prop}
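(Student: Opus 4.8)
The plan is to read both indices off the cellular chain complex $C_*(\tilde U)$, which is the periodic resolution of $\F_p$ truncated in degree $3$ and with its top differential replaced by $\tau^2$: writing $u_0,\dots,u_3$ for the free $R$-generators, $\partial u_1=\tau u_0$, $\partial u_2=\tau^{p-1}u_1$ and $\partial u_3=\tau^2u_2$. First I would compute $H_*(\tilde U)$ as an $R$-module. The space $\tilde U$ is $1$-connected, $H_2(\tilde U)=\ker(\tau^{p-1})/\operatorname{im}(\tau^2)=\tau R/\tau^2R\cong\F_p$ with generator the $\tau$-divisible cycle $\tau u_2$, and $H_3(\tilde U)=\ker(\tau^2)=\tau^{p-2}R$, generated by $\tau^{p-2}u_3$, again divisible by $\tau$ since $p\ge 3$. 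Since $C_*(U)=C_*(\tilde U)\otimes_R\F_p$ has all differentials zero, $H_i(U)=\F_p$ for $i=0,1,2,3$.

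For $\ind(U)=3$ I would build the comparison chain map $\phi\colon C_*(\tilde U)\to P_*$ to the standard resolution $P_*=C_*(E\Z/p)$ (with generators $b_i$ and $\partial b_i=\tau b_{i-1}$ or $\tau^{p-1}b_{i-1}$) lifting the augmentation: one takes $\phi_i(u_i)=b_i$ for $i\le 2$, while $\partial(\tau b_3)=\tau^2b_2$ forces $\phi_3(u_3)=\tau b_3$. Tensoring with $\F_p$ computes $f_*$; as $P_*\otimes_R\F_p$ has zero differentials, $f_*$ sends $\bar u_i\mapsto\bar b_i$ for $i\le 2$ but $\bar u_3\mapsto\overline{\tau b_3}=0$. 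Hence $\operatorname{im}f_*=\bigoplus_{i=0}^{2}H_i(B\Z/p)$ and $\ind(U)=3$. (Equivalently, via Lemma~\ref{transg} with $m=3$: $1$-connectivity gives $\ind(U)\ge 3$, and the transgression $d_3$ is nonzero because $\dim_{\F_p}H_2(U)=1<2=\dim E^2_{2,0}+\dim E^2_{0,2}$.)

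For $V=U\ast_p\{\ast\}$, with $\tilde V=\tilde U\ast\Z/p$, Lemma~\ref{joinZp} gives $H_3(\tilde V)=H_2(\tilde U)\otimes_{\F_p}\tau R\cong\tau R$ and $H_1(\tilde V)=H_2(\tilde V)=0$, so $\tilde V$ is $2$-connected; the upper bound $\ind(V)\le 5$ is immediate from $\dim\tilde V=4$ (or from Theorem~\ref{oldaddit}). A direct comparison map is now unpleasant, since the join cells $u_i\ast g^j$ assemble into free $R$-modules of rank $p$, so instead I would invoke Lemma~\ref{transg} with $m=4$: it suffices to show the transgression $d_4\colon E^4_{4,0}\to E^4_{0,3}$ vanishes. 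By $2$-connectivity $E^4_{4,0}=H_4(B\Z/p)=\F_p$ and $E^4_{0,3}=(H_3\tilde V)_{\Z/p}=(\tau R)_{\Z/p}\cong\F_p$, and the rows $t=1,2$ vanish, so the only differential into or out of total degree $3$ is this $d_4$. Consequently $d_4=0$ precisely when $\dim_{\F_p}H_3(V)=2=\dim E^2_{3,0}+\dim E^2_{0,3}$.

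The crux, which I expect to be the main obstacle, is therefore the identity $\dim H_3(V)=2$. I would obtain it from Lemma~\ref{joinpt}: the cofiber sequence $\tilde U\xrightarrow{q}U\to V$ yields $0\to\operatorname{coker}q_*\to\tilde H_*(V)\to\Sigma\ker q_*\to 0$, and the key point is that $q_*\colon H_*(\tilde U)\to H_*(U)$ vanishes in degrees $1,2,3$. Indeed $q_*$ is reduction mod $\tau$ on cycles, and the generating cycles $\tau u_2$ and $\tau^{p-2}u_3$ of $H_2(\tilde U)$ and $H_3(\tilde U)$ are divisible by $\tau$, hence map to $0$. Thus $\tilde H_3(V)$ is an extension of $\ker(q_*)_2=H_2(\tilde U)\cong\F_p$ by $\operatorname{coker}(q_*)_3=H_3(U)\cong\F_p$, giving $\dim H_3(V)=2$; this forces $d_4=0$, so $\ind(V)\ge 5$ and hence $\ind(V)=5$. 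Conceptually, the whole phenomenon is engineered by attaching the $3$-cell along $\tau^2$ rather than $\tau$: this is exactly what makes the generator of $H_2(\tilde U)$ a $\tau$-divisible class, kills $q_*$ in positive degrees, and pushes the index to the extreme value $5$.
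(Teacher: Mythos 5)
Your proof is correct and follows essentially the same route as the paper: compute $H_*(\tilde U)$ as an $R$-module, use Lemma~\ref{joinpt} together with the vanishing of $q_*$ (because all positive-degree cycles of $C_*(\tilde U)$ are $\tau$-divisible) to get $\dim H_3(U\ast_p\{\ast\})=2$, conclude that the transgression $d_4$ vanishes in the Serre spectral sequence of $\tilde U\ast\Z/p\to U\ast_p\{\ast\}\to B\Z/p$, and apply Lemma~\ref{transg} plus the dimension bound. The only addition is your explicit chain-level comparison with the standard resolution proving $\ind(U)=3$, which the paper leaves to the reader and which is a valid way to fill that gap.
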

\begin{proof}
Since $\tau^2=g^0-2g+g^2 \in R$, the chain complex $C_*(\tilde U)$ is given by
$$  \cdots \to 0 \to R \xrightarrow{\tau^2} R \xrightarrow{\tau^{p-1}} R \xrightarrow{\tau} R $$
and hence, as $R$-modules we have
$$
H_k(\tilde U) \cong \begin{cases}
\tau^{p-1}R & \text{ if } k= 0,2 \\
\tau^{p-2}R & \text{ if } k= 3 \\
0 & \text{ otherwise.}
\end{cases}
$$
From Lemmas \ref{joinZp} and \ref{tensors} we conclude that 
\begin{equation}
\label{cohjoin1}
H_k(\tilde U \ast \Z/p) \cong \begin{cases}
\tau^{p-1} R & \text{ if } k=0 \\
\tau R & \text{ if } k= 3 \\
\tau^2R \oplus R & \text{ if } k= 4 \\
0 & \text{ otherwise.}
\end{cases}
\end{equation}

On the other hand, $C_*(U) = C_*(\tilde U) \otimes_R \F_p$ is the complex
$$ \cdots \to 0 \to \F_p \xrightarrow{0} \F_p \xrightarrow{0} \F_p \xrightarrow{0} \F_p $$
hence the projection map $q \colon \tilde U \to U$ induces the $0$ homomorphism in positive degrees. 
Lemma \ref{joinpt} implies that 
$$
\tilde H_k( U \ast_p \{\ast\}) = \begin{cases}
\F_p & \text{ if } k=1,2 \\
\F_p \oplus \F_p & \text{ if } k=3,4 \\
0 & \text{otherwise.}
\end{cases}
$$

Now consider the Leray-Serre spectral sequence of the Borel construction on $\tilde U \ast \Z/p$
\begin{equation}
\label{SSS1}
\tilde U \ast \Z/p  \xrightarrow{i} (\tilde U \ast \Z/p)_{h\Z/p}  \to B\Z/p
\end{equation}
and recall that the natural projection $(\tilde U \ast \Z/p)_{h\Z/p} \xrightarrow{\pi} U\ast_p \{\ast\}$ is a homotopy equivalence making the 
following diagram commute:
$$
\begin{tikzcd}
\tilde U \ast \Z/p \arrow[r,"i"] \arrow[rd,"q"] &  (\tilde U \ast \Z/p)_{h\Z/p} \arrow[d,"\pi"] \arrow[r]  & B\Z/p\\
& U\ast_p \{\ast\} \arrow[ur] & 
\end{tikzcd}
$$
We have 
$$ E^2_{p,q} = H_p(B\Z/p; \{ H_q( \tilde U \ast \Z/p) \}) = H_p(\Z/p; H_q(\tilde U \ast \Z/p))  \Rightarrow H_{p+q}( U\ast_p \{\ast\})$$
In view of \eqref{cohjoin1}, we see that $E^2_{p,q}=0$ for $q=1,2$ and $E^2_{0,3} \cong \F_p$. As $H_3(U \ast_p \{\ast\}) \cong
\F_p^2$ it follows that the differential $d^4 \colon E^4_{4,0} \to E^4_{0,3}$ must be $0$ and hence, by Lemma \ref{transg}, we have 
$\ind(U \ast_p \{\ast\}) \geq 5$. As $\dim U \ast_p \{\ast\}$=4, it follows that 
$\ind(U \ast_p \{\ast\})=5$.

We leave the similar, but easier, argument showing that $\ind(U)=3$ to the reader. 
\end{proof}

\begin{remark}
Let $k\geq 1$, $\alpha_k$ denote a generator of $H^k(B\Z/p)$ and $Y_k$ denote the homotopy fiber of 
$B\Z/p \xrightarrow{\alpha_k} K(\Z/p,k)$. Then $Y_k$ is the orbit space of a free $\Z/p$-action on a complex 
with the homotopy type of $K(\Z/p,k-1)$. This action is a ''versal" action of index $k$ in the sense 
that, for any free $\Z/p$-CW complex $\tilde X$ with $\ind(X)=k$, the classifying map $X \to B\Z/p$ factors through $Y_k$.

It was shown in \cite{Go} that for $k,l$ odd, not both equal to one, $\ind(Y_k \ast_p Y_l)=k+l+2$. The space $\tilde U$ of Definition \ref{upper}
is the essential part of the $3$-skeleton of a $\Z/p$-free cellular approximation of $K(\Z/p,2) = \tilde Y_3$.
\end{remark}

\begin{lemma}
\label{lower}
There exists a two dimensional free $\Z/p$-cell complex $\tilde V$ such that $C_*(\tilde V)$ is isomorphic to the complex
$$
\cdots \to 0 \to R \xrightarrow{ \left[ \begin{array}{c} \tau^{p-1} \\ -\tau \end{array} \right] } R \oplus R 
\xrightarrow{ \left[ \begin{array}{cc} \tau & 0 \end{array} \right]} R 
$$
\end{lemma}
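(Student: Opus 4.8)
The plan is to build $\tilde V$ one free skeleton at a time, realizing the prescribed $\F_p$-complex as the mod $p$ reduction of an integral cellular chain complex. For the $0$-skeleton I take $\tilde V^{(0)}=\Z/p$, a single free orbit, so that $C_0(\tilde V)=R$ with generator $v$. To build the $1$-skeleton I attach two free $1$-cells $e_1,e_2$: attaching $e_1$ with its two endpoints at $v$ and $gv$ gives $\partial e_1=(1-g)v$, while attaching $e_2$ as a loop at $v$ gives $\partial e_2=0$. Equivariantly this realizes $\partial_1=[\tau\ 0]\colon R\oplus R\to R$ both integrally and after reduction. Topologically $\tilde V^{(1)}$ is a connected graph (a $p$-gon carrying the $e_1$-orbit, with a loop from the $e_2$-orbit at each vertex), so its integral homology $H_1(\tilde V^{(1)};\Z)=\ker\partial_1^{\Z}$ is free abelian and the Hurewicz homomorphism $\pi_1(\tilde V^{(1)})\to H_1(\tilde V^{(1)};\Z)$ is surjective.

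It then remains to attach a single free $2$-cell $\sigma$ whose integral cellular boundary reduces mod $p$ to $\tau^{p-1}e_1-\tau e_2$. Attaching a free $2$-cell equivariantly along the orbit of a loop $\gamma\colon S^1\to\tilde V^{(1)}$ produces a complex in which $\partial_2^{\Z}(\sigma)$ is the image of $[\gamma]$ under Hurewicz, i.e.\ an arbitrary prescribed element $z\in H_1(\tilde V^{(1)};\Z)=\ker\partial_1^{\Z}$, by surjectivity of Hurewicz for a graph. Thus it suffices to exhibit an integral $1$-cycle $z\in\ker\partial_1^{\Z}$ with $z\equiv (1-g)^{p-1}e_1-(1-g)e_2\pmod p$; attaching $\sigma$ along a loop with $[\gamma]=z$ then gives the asserted chain complex after reduction, since all higher skeleta are empty and $\tilde V$ is $2$-dimensional.

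The one point requiring care—and the crux of the argument—is that the naive lift $z_0=(1-g)^{p-1}e_1-(1-g)e_2$ is not itself an integral cycle: one computes $\partial_1^{\Z}(z_0)=(1-g)^p v$. The plan is to correct $z_0$ by a multiple of $p$. Since $\binom{p}{k}\equiv 0\pmod p$ for $0<k<p$ and $g^p=1$, one has $(1-g)^p=p\,\omega$ in $\Z[\Z/p]$ for some $\omega$; applying the augmentation $\epsilon\colon\Z[\Z/p]\to\Z$ gives $p\,\epsilon(\omega)=\epsilon((1-g)^p)=0$, so $\omega$ lies in the augmentation ideal $I=(1-g)$. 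Hence $\omega v\in I\cdot v=\im\partial_1^{\Z}$, so there is $c\in C_1(\tilde V^{(1)};\Z)$ with $\partial_1^{\Z}(c)=-\omega v$, and $z:=z_0+p\,c$ satisfies $\partial_1^{\Z}(z)=(1-g)^p v-p\,\omega v=0$ while still reducing to $\tau^{p-1}e_1-\tau e_2$ mod $p$. Attaching the free $2$-cell along a loop representing this $z$ completes the construction. I expect the verification that $\omega\in I$ (equivalently, that the obstruction $(1-g)^p v$ lies in $p\cdot\im\partial_1^{\Z}$) to be the only substantive step; everything else is the standard bookkeeping of realizing a chain complex by attaching free cells.
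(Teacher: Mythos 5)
Your construction is correct and follows essentially the same route as the paper: build the free $1$-skeleton from two free $1$-cells realizing $[\tau\ \ 0]$ (the paper attaches both cells like the $1$-cell of $\tilde L^1_p$ and changes basis via $\psi(0,g^0)=e_1-e_2$, whereas you attach $e_2$ as a loop), then attach a free $2$-cell along a loop realizing the desired cycle, using surjectivity of the Hurewicz map for a connected graph. The one place you go beyond the paper is the explicit correction $z=z_0+pc$ needed to make the lift an honest integral cycle: the paper compresses this into ``it is clear we can pick an attaching map'', and your check that the obstruction $(1-g)^p v$ lies in $p\cdot\im\partial_1^{\Z}$ is precisely the point being glossed over there.
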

\begin{proof}
The $1$-skeleton of $\tilde V$ is obtained by attaching two $1$-cells
$e_1,e_2$ to $\tilde V^0 = \Z/p$ using the attaching map of the $1$-cell in $\tilde L^1_p=S^1$. 
We define the isomorphism $\psi \colon R \oplus R \to C_1(\tilde V)$ by 
$\psi(g^0,0)=e_1$ and $\psi(0,g^0)=e_1-e_2$.

Since $\tilde V^1$ is homotopy equivalent to a wedge of circles, it is clear we can pick an attaching map $S^1 \to \tilde V^1$ 
so as to realise the desired differential $\partial_2$.
\end{proof}

\begin{prop}
\label{proplower}
Let $\tilde V$ be one of the spaces provided by Lemma \ref{lower}. Then 
$$\ind(V)=3 \text{  and }\ind(V \ast_p \{\ast\})= 3.$$
\end{prop}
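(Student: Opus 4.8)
The plan is to follow the strategy of Proposition \ref{propupper} closely, the essential difference being that here the single relevant spectral sequence differential must be shown to be \emph{nonzero} rather than zero, and I will arrange for the homological input to force this.

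First I would settle $\ind(V)=3$. From the complex of Lemma \ref{lower} one computes the $R$-module homology of $\tilde V$, finding $H_0(\tilde V)\cong H_2(\tilde V)\cong \tau^{p-1}R$ and $H_1(\tilde V)\cong\tau^{p-2}R$. Reducing mod $\tau$, the complex $C_*(V)=C_*(\tilde V)\otimes_R\F_p$ has \emph{zero} differentials, so $H_k(V)\cong \F_p,\F_p^2,\F_p$ for $k=0,1,2$ and vanishes for $k\geq 3$. Since $\dim V=2$ this already gives $\ind(V)\leq 3$. For the reverse inequality I would lift the classifying map to an equivariant chain map $C_*(\tilde V)\to C_*(\tilde L^\infty_p)$ covering the identity on $H_0$; because $C_*(V)$ has vanishing differential, reducing this lift mod $\tau$ computes $f_*$ on homology directly, and a short check shows $f_*$ is onto in degrees $0,1,2$, whence $\ind(V)=3$ (one could instead invoke Lemma \ref{transg}).

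For the join I would first compute $H_*(\tilde V\ast \Z/p)$ as $R$-modules from Lemmas \ref{joinZp} and \ref{tensors}: the nonzero groups are $H_0\cong \tau^{p-1}R$, $H_2\cong H_1(\tilde V)\otimes_{\F_p}\tau R\cong \tau^2 R\oplus R$, and $H_3\cong H_2(\tilde V)\otimes_{\F_p}\tau R\cong \tau R$, while $H_1(\tilde V\ast\Z/p)=0$. The decisive input, however, is the single number $\dim_{\F_p} H_2(V\ast_p\{\ast\})=2$. I would obtain it from Lemma \ref{joinpt}: the projection $q_\#\colon C_*(\tilde V)\to C_*(V)$ is reduction mod $\tau$, so on homology $q_*$ annihilates $H_2(\tilde V)$ (its generator is $\tau^{p-1}$ times the top cell, which reduces to $0$) and has one-dimensional kernel in degree $1$. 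The short exact sequence $0\to \coker q_*\to \tilde H_*(V\ast_p\{\ast\})\to \Sigma\ker q_*\to 0$ then yields $\tilde H_1\cong \F_p$, $\tilde H_2\cong \F_p^2$, $\tilde H_3\cong \F_p$.

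Finally I would run the Serre spectral sequence of the Borel fibration $\tilde V\ast\Z/p \to (\tilde V\ast\Z/p)_{h\Z/p}\simeq V\ast_p\{\ast\}\to B\Z/p$ with $E^2_{s,t}=H_s(\Z/p;H_t(\tilde V\ast\Z/p))$, exactly as in Proposition \ref{propupper}. Here the row $t=1$ vanishes, $E^2_{2,0}\cong\F_p$, and $E^2_{0,2}\cong H_0(\Z/p;\tau^2R\oplus R)\cong\F_p^2$. The image of the edge homomorphism $f_*\colon H_n(V\ast_p\{\ast\})\to H_n(B\Z/p)$ is $E^\infty_{n,0}$, so $\ind(V\ast_p\{\ast\})$ is the least $n$ with $E^\infty_{n,0}=0$; since no differential can touch $E_{s,0}$ for $s\leq 2$, we get $E^\infty_{s,0}\cong\F_p$ there and hence $\ind\geq 3$. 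The whole question then reduces to the single differential $d^3\colon E^3_{3,0}\to E^3_{0,2}$, which is the only one that can affect either $E_{3,0}$ or $E_{0,2}$. The main obstacle is that this differential cannot be read off directly, so I would force it from the abutment: $H_2(V\ast_p\{\ast\})$ receives only $E^\infty_{2,0}\cong\F_p$ and $E^\infty_{0,2}\cong \F_p^2/\im d^3$, so the equality $\dim H_2=2$ gives $2=1+(2-\dim\im d^3)$, forcing $\im d^3\neq 0$. Therefore $E^\infty_{3,0}=0$, so $\ind(V\ast_p\{\ast\})\leq 3$, and combined with the lower bound (from Theorem \ref{oldaddit}, or from $E^\infty_{s,0}\cong\F_p$ for $s\leq 2$) this gives $\ind(V\ast_p\{\ast\})=3$.
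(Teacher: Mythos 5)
Your proof is correct and follows essentially the same route as the paper: the same homology computations for $\tilde V$, $\tilde V \ast \Z/p$ and $V \ast_p \{\ast\}$, and the same argument forcing the differential $d^3\colon E^3_{3,0}\to E^3_{0,2}$ to be nonzero from $\dim H_2(V\ast_p\{\ast\})=2$. You additionally supply details the paper leaves to the reader (the rank of $q_*$ in degree $1$, which unlike the $\tilde U$ case is nonzero, and the verification that $\ind(V)=3$), and these details check out.
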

\begin{proof}
The proof is analogous to that of Proposition \ref{propupper} so we just indicate the main steps. We have 
$$
H_k(\tilde V) \cong \begin{cases}
\tau^{p-1}R & \text{ if } k= 0, 2\\
\tau^{p-2} R & \text{ if } k= 1 \\
0 & \text{ otherwise.}
\end{cases}
$$
From Lemmas \ref{joinZp} and \ref{tensors} we conclude that 
$$
H_k(\tilde V \ast \Z/p) \cong \begin{cases}
\tau^{p-1}R & \text{ if } k= 0 \\
\tau^2R \oplus R & \text{ if } k= 2 \\
\tau R & \text{ if } k= 3 \\
0 & \text{ otherwise.}
\end{cases}
$$

The complex $C_*(V)$ is isomorphic to
$$ \cdots \to 0 \to \F_p \xrightarrow{0} \F_p \oplus \F_p \xrightarrow{0} \F_p $$
and Lemma \ref{joinpt} implies
$$
\tilde H_k( V \ast_p \{\ast\}) = \begin{cases}
\F_p & \text{ if } k=1,3 \\
\F_p \oplus \F_p & \text{ if } k=2 \\
0 & \text{otherwise.}
\end{cases}
$$

In the Leray-Serre spectral sequence of 
$$
\tilde V \ast \Z/p  \to V \ast_p \{\ast\}  \to B\Z/p
$$
there must be a differential $d^3 \colon E^3_{3.0} \to E^3_{0,2} \cong \F_p^2$, for otherwise we would have
$H_2(V \ast \Z/p) \cong \F_p^3$. It follows from Lemma \ref{transg} that $\ind(V \ast_p \Z/p)=3$. We leave it to the reader to check that $\ind(V)$ is also 3.
\end{proof}

\begin{remark}
The complexes $\tilde V$ were found in the following way: they are the simplest examples of a space of index $k$
 such that the generator of $H_{k+1} B\Z/p$ does not transgress in the Serre spectral sequence of
 $\tilde V \to V \to B\Z/p$. This can be shown using the cellular model for the Borel construction described in \cite{AP}.
\end{remark}

\end{document}